\newtheorem{theorem}{Theorem}[section]
\newtheorem{lemma}[theorem]{Lemma}
\newtheorem{prop}[theorem]{Proposition}
\newcommand{\nm}{\noalign{\smallskip}}
\newcommand{\ds}{\displaystyle}
\newcommand{\pf}{\noindent {\sl Proof}. \ }
\newcommand{\p}{\partial}
\newcommand{\eqnref}[1]{(\ref {#1})}
\newcommand{\Rbb}{\mathbb{R}}
\newcommand{\Zbb}{\mathbb{Z}}
\newcommand{\la}{\langle}
\newcommand{\ra}{\rangle}
\newcommand{\Acal}{\mathcal{A}}
\newcommand{\Kcal}{\mathcal{K}}
\newcommand{\Scal}{\mathcal{S}}
\def\Be{{\bf e}}
\def\Bx{{\bf x}}
\def\By{{\bf y}}
\newcommand{\Ga}{\alpha}
\newcommand{\Gd}{\delta}
\newcommand{\Gvf}{\varphi}
\newcommand{\Gl}{\lambda}
\newcommand{\Gn}{\eta}
\newcommand{\Gm}{\mu}
\newcommand{\Gv}{\nu}
\newcommand{\Gp}{\pi}
\newcommand{\Gt}{\theta}
\newcommand{\Gs}{\sigma}
\newcommand{\Go}{\omega}
\newcommand{\Gx}{\xi}
\newcommand{\Gy}{\psi}
\newcommand{\GD}{\Delta}
\newcommand{\GG}{\Gamma}
\newcommand{\GO}{\Omega}
\newcommand{\beq}{\begin{equation}}
\newcommand{\eeq}{\end{equation}}
\def\ol{\overline}
\numberwithin{equation}{section}
\numberwithin{figure}{section}
\begin{document}

\title{Spectral structure of the Neumann--Poincar\'e operator on tori\thanks{\footnotesize This work was
supported by NRF grants No. 2016R1A2B4011304 and 2017R1A4A1014735, and by A3 Foresight Program among China (NSF), Japan (JSPS), and Korea (NRF 2014K2A2A6000567).}}


\author{Kazunori Ando\thanks{Department of Electrical and Electronic Engineering and Computer Science, Ehime University,
Ehime 790-8577, Japan (ando@cs.ehime-u.ac.jp).} \and Yong-Gwan Ji\thanks{Department of Mathematics and Institute of Applied Mathematics, Inha University, Incheon 22212, S. Korea (22151063@inha.edu, hbkang@inha.ac.kr, k.goe.dai@gmail.com).} \and Hyeonbae Kang\footnotemark[3] \and Daisuke Kawagoe\footnotemark[3]  \and Yoshihisa Miyanishi\thanks{Center for Mathematical Modeling and Data Science, Osaka University, Osaka 560-8531, Japan (miyanishi@sigmath.es.osaka-u.ac.jp).}}

\maketitle

\begin{abstract}
We address the question whether there is a three-dimensional bounded domain such that the Neumann--Poincar\'e operator defined on its boundary has infinitely many negative eigenvalues. It is proved in this paper that tori have such a property. It is done by decomposing the Neumann--Poincar\'e operator on tori into infinitely many self-adjoint compact operators on a Hilbert space defined on the circle using the toroidal coordinate system and the Fourier basis, and then by proving that the numerical range of infinitely many operators in the decomposition has both positive and negative values.
\end{abstract}

\noindent{\footnotesize {\bf AMS subject classifications}. 47A45 (primary), 31B25 (secondary)}

\noindent{\footnotesize {\bf Key words}. Neumann-Poincar\'e operator, negative eigenvalues, tori, stationary phase method}

\section{Introduction}

The goal of this paper is to prove the following theorem.
\begin{theorem}\label{main1}
The Neumann-Poincar\'e operator on tori has infinitely many negative eigenvalues as well as infinitely many positive ones.
\end{theorem}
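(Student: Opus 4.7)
The strategy, as flagged in the abstract, is to decompose $\mathcal{K}^*$ along Fourier modes in the toroidal azimuthal angle and then, for each mode individually, exhibit both a positive and a negative eigenvalue via the numerical range.

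First I would set up toroidal coordinates $(\xi, \eta, \phi)$ so that the boundary of the torus is $\xi = \xi_0$ and $(\eta, \phi) \in [0, 2\pi)^2$ parametrize the surface. Because the torus is rotationally symmetric about its axis, $\mathcal{K}^*$ commutes with rotations in $\phi$. Writing a density as $\varphi(\eta, \phi) = \sum_n \varphi_n(\eta) e^{in\phi}$ therefore yields an orthogonal decomposition
\[
\mathcal{K}^* \;=\; \bigoplus_{n \in \mathbb{Z}} \mathcal{K}_n^{*},
\]
where each $\mathcal{K}_n^{*}$ acts on a weighted $L^2$-space on the circle parametrized by $\eta$. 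Via the Plemelj--Kellogg symmetrization (the fact that $\mathcal{K}^*$ is self-adjoint in the single-layer inner product), one endows this fiber with a Hilbert space structure $\mathcal{H}_n$ making $\mathcal{K}_n^{*}$ compact and self-adjoint.

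Since each $\mathcal{K}_n^*$ is compact and self-adjoint, the closure of its numerical range coincides with the convex hull of its spectrum. Hence, producing two test functions $\psi_n^{\pm}\in\mathcal{H}_n$ with
\[
\langle \mathcal{K}_n^* \psi_n^{+}, \psi_n^{+} \rangle_{\mathcal{H}_n} > 0 \quad\text{and}\quad \langle \mathcal{K}_n^* \psi_n^{-}, \psi_n^{-} \rangle_{\mathcal{H}_n} < 0
\]
forces $\mathcal{K}_n^*$ to possess at least one eigenvalue of each sign. Since different Fourier modes are orthogonal, doing this for infinitely many $n$ immediately yields infinitely many positive and infinitely many negative eigenvalues of $\mathcal{K}^*$, proving the theorem.

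The heart of the argument is to construct such test densities for all sufficiently large $|n|$. The kernel of $\mathcal{K}_n^*$ is the $n$-th Fourier coefficient in $\phi$ of $\partial_{\nu_x}(4\pi|x-y|)^{-1}$ restricted to the torus; in toroidal coordinates this takes the oscillatory form
\[
K_n(\eta,\eta')\;=\;\frac{1}{2\pi}\int_0^{2\pi} e^{-in\phi}\,A(\eta,\eta',\phi)\,d\phi,
\]
with a smooth amplitude $A$ singular on the diagonal $\eta=\eta'$, $\phi=0$. I would evaluate $\langle \mathcal{K}_n^*\psi,\psi\rangle_{\mathcal{H}_n}$ for densities $\psi$ concentrated around a chosen latitude $\eta_*$ and apply the stationary phase method in $\phi$ (combined with a Laplace-type analysis in $(\eta,\eta')$) to extract the leading $|n|$-asymptotics of this quadratic form. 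Geometrically, the inner and outer equators of the torus carry opposite signs of the Gauss curvature, so one expects concentrating $\psi_n^{+}$ on one of them and $\psi_n^{-}$ on the other to give quadratic forms whose leading-order constants have opposite signs.

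The main obstacle is the stationary phase analysis: producing a representation of $K_n(\eta,\eta')$ to which uniform stationary phase applies, handling the diagonal singularity so that it does not spoil the asymptotics, and then tracking the sign of the leading-order coefficient after the symmetrization inner product of $\mathcal{H}_n$ has been inserted. A subsidiary difficulty is to make the $\mathcal{H}_n$-inner product explicit enough on concentrating test functions that the sign of the leading constant can be read off; this will require a careful description of how the single-layer kernel decomposes under the Fourier splitting in $\phi$.
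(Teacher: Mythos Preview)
Your overall architecture matches the paper's: decompose along Fourier modes in the toroidal angle, realize each fiber operator as compact self-adjoint via the single-layer inner product, and exhibit both signs in the numerical range for infinitely many modes. The substantive difference is in the choice of test functions and the ensuing asymptotic analysis.

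The paper does \emph{not} use densities concentrated near the inner or outer equator. Instead it takes the test functions to be the poloidal Fourier modes $g_l(\eta)=e^{il\eta}$. Because the fiber single-layer operator $\mathcal S_k$ is a convolution in $\eta$, these modes diagonalize it: $\mathcal S_k[g_l]=s_{k,l}(\xi)\,g_l$ with $s_{k,l}(\xi)>0$. This collapses the quadratic form $\langle \mathcal A_k[g_l],g_l\rangle_k$ to a positive constant times a single explicit double oscillatory integral $I_{k,l}(\xi)$ over $(\varphi,\eta)$. After passing to polar coordinates in the $(\varphi,\eta)$-plane, a standard two-dimensional stationary phase (phase $-r\cos\theta$, critical points $(0,\pm\pi/2)$) gives the sign: taking $l=0$ and $k\to\infty$ yields $I_{k,0}(\xi)>0$, while fixing $k$ and sending $l\to\infty$ yields $I_{k,l}(\xi)<0$. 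Thus for every large $|k|$ the operator $\mathcal A_k$ has eigenvalues of both signs.

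Your localized-bump approach is plausible in spirit, but it runs straight into the two obstacles you name: the diagonal singularity of the kernel under concentration, and the need to control the $\mathcal H_n$-inner product (i.e.\ $\mathcal S_n$ applied to a bump) well enough to read off a sign. The paper's choice of $e^{il\eta}$ sidesteps both issues at once, since $\mathcal S_k$ acts by a positive scalar on these modes and the remaining integral has a smooth amplitude after the polar change of variables. If you want to complete your version, the cleanest route is to replace the geometric localization by this Fourier-in-$\eta$ trick; the Gauss-curvature intuition is then encoded in the sign of the amplitude $h(0,\theta)$ at the stationary points rather than in any spatial concentration.
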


To demonstrate novelty of this result we briefly review a history of the spectral theory of the Neumann-Poincar\'e (abbreviated by NP) operator.

The NP operator is an integral operator naturally arising when solving classical boundary value problems using layer potentials. It is defined on the boundary $\p\GO$ of a bounded domain $\GO$ in $\Rbb^d$ ($d=2,3$). Precisely, it is defined by
\beq\label{introkd}
\Kcal_{\p \GO}^* [\Gvf] (\Bx) =
\mbox{p.v.} \frac{1}{\Go_d} \int_{\p \GO} \frac{(\Bx - \By)\cdot\nu_\Bx }{|\Bx - \bold{y}|^d} \Gvf(\bold{y})\,d\Gs(\bold{y}), \quad \Bx \in \p \GO,
\eeq
where $\nu_\Bx$ denotes the outward unit normal vector to $\p \GO$ at $\Bx$ and $\Go_d=2\pi$ if $d=2$, $\Go_d=4\pi$ if $d=3$. Either $\Kcal_{\p \GO}^*$ or its adjoint $\Kcal_{\p \GO}$ (in $L^2(\p\GO)$) is called the NP operator on $\p\GO$. $\Kcal_{\p \GO}$ is frequently called the double layer potential.

Observe that the integral kernel of $\Kcal_{\p \GO}^*$ is the normal derivative of the fundamental solution to the Laplacian
\beq\label{gammacond}
-\GG (\Bx) =
\begin{cases}
\ds \frac{1}{2\pi} \ln |\Bx|\;, \quad & d=2 \;, \\ \nm \ds
-\frac{1}{4\pi} |\Bx|^{-1}\;, \quad & d = 3 \;.
\end{cases}
\eeq
The single layer potential $\Scal_{\p \GO} [\Gvf]$ of a density function $\Gvf \in L^2(\p \GO)$ is defined by
\beq\label{singledef}
\Scal_{\p \GO} [\Gvf] (\Bx) := \int_{\p \GO} \GG (\Bx - \bold{y}) \Gvf (\bold{y}) \, d\Gs(\bold{y}), \quad \Bx \in \Rbb^d .
\eeq
It is also common to use $-\GG(\Bx)$ for the integral kernel to define the single layer potential. We take the definition \eqnref{singledef} in this paper so that the single layer potential becomes a positive operator. The connection between the NP operator and the single layer potential is given by the jump relation (see, for example, \cite{AmKa07Book2, Fo95}):
\beq\label{singlejump}
\p_\nu \Scal_{\p \GO} [\Gvf] \Big|_\pm (\Bx) = \biggl( \mp \frac{1}{2} I - \Kcal_{\p \GO}^* \biggr) [\Gvf] (\Bx),
\quad \Bx \in \p \GO \;,
\eeq
where $\p_{\nu}$ denotes the outward normal derivative and the subscripts $\pm$ indicate the limit from outside and inside $\GO$, respectively.

The relation \eqnref{singlejump} shows that, for example, to solve the Neumann problem, $\GD u=0$ in $\GO$ and $\p_\nu u = f$ on $\p\GO$, it suffices to have $u:= \Scal_{\p \GO} [\Gvf]$ in $\GO$, where $\Gvf$ is the solution of the following integral equation:
\beq\label{fredholm}
\biggl( \frac{1}{2} I - \Kcal_{\p \GO}^* \biggr) [\Gvf]=f \quad\mbox{on } \p\GO.
\eeq
This kind of approach for solving boundary value problems traces back to C. Neumann \cite{Neumann-87} and Poincar\'e \cite{Poincare-AM-87} as the name of the operator suggests. If $\p\GO$ is smooth ($C^{1,\Ga}$ for some $\Ga>0$ to be precise), then $\Kcal_{\p \GO}^*$ is compact on $L^2(\p\GO)$ (and on $H^{-1/2}(\p\GO)$, the Sobolev space of order $-1/2$), and hence the Fredholm index theory can be applied to solve \eqnref{fredholm}. On the other hand, if $\p\GO$ is merely Lipschitz, say if it has a corner, then $\Kcal_{\p \GO}^*$ is a singular integral operator which has been one of central subjects of mathematical research in the last century. For example, the $L^2$-boundedness was proved in the seminal paper \cite{CMM82} and solvability of \eqnref{fredholm} was established in \cite{Verch-JFA-84}.

Note that $\Kcal_{\p \GO}^*$ is not self-adjoint on $L^2(\p\GO)$, namely, $\Kcal_{\p \GO}^* \neq \Kcal_{\p \GO}$, unless $\GO$ is a disk or a ball \cite{Lim}. However, in \cite{KPS} where Poincar\'e's work was revived in modern language, it is revealed that $\Kcal_{\p \GO}^*$ can be realized as a self-adjoint operator on $H^{-1/2}(\p\GO)$ by introducing an inner product with the single layer potential. Let, for $\Gvf, \psi \in H^{-1/2}(\p\GO)$,
\beq\label{innerproduct}
\la \Gvf, \psi \ra_*:= (\Gvf, \Scal_{\p\GO}[\psi]),
\eeq
where $( \cdot, \cdot)$ is the $H^{-1/2}-H^{1/2}$ pairing. Since $\Scal_{\p\GO}$ maps $H^{-1/2}(\p\GO)$ into $H^{1/2}(\p\GO)$, $\la \cdot, \cdot \ra_*$ is well-defined. In fact, it is an inner product on $H^{-1/2}(\p\GO)$ in three dimensions, and the norm induced by $\la \cdot, \cdot \ra_*$ is actually equivalent to $H^{-1/2}$-norm (see, for example, \cite{KKLSY}). In two dimensions, $\Scal_{\p\GO}$ may have one-dimensional kernel \cite{Verch-JFA-84}, but it can be remedied so that $\la \cdot, \cdot \ra_*$ is an inner product (see, e.g., \cite{AK}). Then one can use the Plemelj's symmetrization principle, which states
\beq
\Scal_{\p\GO} \Kcal_{\p \GO}^* = \Kcal_{\p \GO}\Scal_{\p\GO},
\eeq
to symmetrize $\Kcal_{\p \GO}^*$, that is,
\beq\label{symmetry}
\la \Kcal_{\p \GO}^*[\Gvf], \psi \ra_* = \la \Gvf, \Kcal_{\p \GO}^*[\psi] \ra_*.
\eeq
Now, if $\p\GO$ is $C^{1,\Ga}$ for some $\Ga>0$, then $\Kcal_{\p \GO}^*$, as a self-adjoint compact operator, has real eigenvalues converging to $0$. It is worth mentioning that there are some work on convergence rate \cite{AKM, MS} culminated in Weyl's law in three dimensions \cite{Miya}. If $\p\GO$ has a corner, then $\Kcal_{\p \GO}^*$ has continuous spectrum \cite{BZ, HKL, HP, KLY, PP, PP2}.

Lately interest in the spectral properties of the NP operator is growing fast, which is due to their relations to plasmonics: plasmonic resonance occurs at eigenvalues of the NP operator \cite{AMRZ, MFZ} and anomalous localized resonance occurs at the accumulation point of eigenvalues \cite{ACKLM}. However, for all significant progress that has been made, research on NP spectrum (spectrum of the NP operator) is at its early stage and many questions still remain unanswered. The question on negative eigenvalues is one of them.

Unlike two-dimensional NP spectrum, which is symmetric with respect to $0$ except $1/2$ (see, e.g., \cite{HKL, KPS}) and hence has the same number of negative eigenvalues as positive ones, not so many surfaces (boundaries of three-dimensional domains) are known to have negative NP eigenvalues. In fact, NP eigenvalues on spheres are all positive, and Poincar\'e suggested that all the NP eigenvalues are positive even though notion of spectrum did not exist at his time (see \cite{BT, KPS}). It is only in 1994 that the NP operator on an oblate spheroid is shown to have a negative eigenvalue \cite{Ahner}, which was the first example of surfaces with a negative NP eigenvalue. We emphasize the oblate spheroid considered in the above mentioned paper is thin, and negativity of an eigenvalue is shown numerically. Furthermore, we do not know how many negative eigenvalues  there are. We also mention that NP eigenvalues on ellipsoids can be found explicitly using Lam\'e functions for which we also refer to \cite{AA, FK, Mart, Ritt}. However, it seems quite difficult to see whether there are negative eigenvalues and how many they are if they exist. Recently, a concavity condition is found, which is sufficient for the NP operator on either the boundary of the domain or its inversion to have a negative eigenvalue \cite{JK}. For example, this condition is fulfilled if there is a point on the boundary where the Gaussian curvature is negative. Thus a natural follow-up question is whether there is a surface admitting infinitely many negative NP eigenvalues.

The study on negative NP eigenvalues in this paper is motivated by a historic reason as mentioned above. In addition, negative NP eigenvalues have some implication on numerical schemes. For example, the optimal parameter for an iterative scheme to solve an exterior Neumann problem was found under the condition that NP eigenvalues are all non-negative \cite[pp. 152--153]{CK}. However, deep understanding on implications of negative NP eigenvalues seems still missing.

Lacking general theory for negative eigenvalues, we seek examples of surfaces with infinitely many negative NP eigenvalues, and tori are candidates. The reason to choose tori as candidates is twofold. One is that a significant portion of tori has a negative Gaussian curvature, and the other is that they have good symmetries to be exploited. In fact, it is conjectured in \cite{Miya} that the NP operator on tori has infinitely many negative eigenvalues. Theorem \ref{main1} resolves it. We emphasize that this is the first example of surfaces on which the NP operator has infinitely many negative eigenvalues.

Theorem \ref{main1} is proved as follows. We decompose the NP operator into infinitely many self-adjoint compact operators on a Hilbert space defined on a circle using the toroidal coordinate system and the Fourier basis. We then show that infinitely many operators in the decomposition have numerical ranges having both positive and negative values, which means that they have both positive and negative eigenvalues. This is proved using the stationary phase method. Since the NP spectrum contains the collection of all eigenvalues of operators in the decomposition, Theorem \ref{main1} follows.

This paper is organized as follows. In section \ref{sec:tor}, we introduce the toroidal coordinate system, and express the single layer potential and the NP operator in terms of that coordinate system. In section \ref{sec:Fou}, we decompose the NP operator by the Fourier expansion with respect to the usual toroidal angle to obtain a series of self-adjoint compact operators, and we show a relation between eigenvalues of these operators and those of the NP operator. In section \ref{sec:main}, we show existence of infinitely many negative eigenvalues of the NP operator as well as infinitely many positive ones.

\section{Toroidal coordinate system and the NP operator} \label{sec:tor}

In this section, we express the single layer potential and the NP operator on a torus in terms of the toroidal coordinate system.

The toroidal coordinate system $(\xi, \eta, \Gvf)$ is given by
\beq
x=\frac{R_0 \sqrt{1-\xi^2}\cos \varphi}{1-\xi \cos \eta}, \quad
y=\frac{R_0 \sqrt{1-\xi^2}\sin \varphi}{1-\xi \cos \eta}, \quad
z=-\frac{R_0 \xi \sin \eta}{1-\xi \cos \eta},
\eeq
where $x$, $y$ and $z$ are ordinary Cartesian coordinates, and $R_0:=\sqrt{r_0^2-a^2}$ is the location of the poloidal axis. The surface $\xi=\mbox{constant}$ is a torus. The parameters $r_0$ and $a$ are the major and minor radii, respectively, of a toroidal system. The variable $\xi$ $(0 < \xi < 1)$ is similar to a minor radius, $\eta$ $(0 \leq \eta <2\pi)$ is a poloidal angle, and $\Gvf$ $(0 \leq \Gvf < 2\pi)$ is the usual toroidal angle (equivalent to the azimuthal angle of standard cylindrical coordinates $(r, \Gvf, z)$, see \cite{B} and the figures therein). The toroidal coordinate system is orthogonal with the scale factors
\beq\label{Jacob}
h_\Gx = \dfrac{R_0}{\sqrt{1 - \Gx^2}(1 - \Gx \cos \Gn)}, \quad h_\Gn = \dfrac{R_0 \Gx}{1 - \Gx \cos \Gn}, \quad h_\Gvf = \dfrac{R_0 \sqrt{1 - \Gx^2}}{1 - \Gx \cos \Gn}.
\eeq

We denote by $\p \GO$ the torus parametrized by $\Gx$, and let $\Bx=(\xi, \eta, \Gvf)$ and $\By=(\xi, \eta', \Gvf')$ be points on $\p\GO$. The above mentioned paper also showed that the fundamental solution is given by
\begin{align}
\dfrac{1}{4\pi} \dfrac{1}{|\Bx - \By|} =& \dfrac{\sqrt{1 - \Gx \cos \Gn} \sqrt{1 - \Gx \cos \Gn^\prime}}{4\pi\sqrt{2} R_0(1 - \Gx^2 \cos(\Gn - \Gn^\prime) - (1 - \Gx^2)  \cos(\Gvf - \Gvf^\prime))^{1/2}} \nonumber \\
=& \dfrac{\Gy(\Gn)^{1/2} \Gy(\Gn')^{1/2}}{4\pi \sqrt{2}R_0 \Gx (\Gm(\Gvf - \Gvf^\prime) - \cos(\Gn - \Gn^\prime))^{1/2}}, \label{fundamental}
\end{align}
where
\beq\label{Gndef}
\Gm(\Gvf - \Gvf^\prime) := \dfrac{1}{\Gx^2} + \left( 1 - \dfrac{1}{\Gx^2} \right) \cos(\Gvf - \Gvf^\prime)
\eeq
and
\beq
\Gy(\Gn) := 1 - \Gx \cos \Gn.
\eeq

We see from \eqnref{Jacob} and \eqnref{fundamental} that the single layer potential $\Scal_{\p \GO}$ defined by \eqnref{singledef} can be expressed as
\beq\label{singledef2}
\Scal_{\p \GO}[f](\Gn, \Gvf) = \int_0^{2\pi} \int_0^{2\pi} s(\Gn, \Gn'; \Gvf - \Gvf') f(\Gn', \Gvf')\,d\Gn' d\Gvf',
\eeq
where
\beq
s(\Gn, \Gn'; \Gvf - \Gvf') := \frac{R_0 \sqrt{1 - \Gx^2} \Gy(\Gn)^{1/2}}{4\pi \sqrt{2} \Gy(\Gn')^{3/2} } \dfrac{1}{ (\Gm(\Gvf - \Gvf^\prime) - \cos(\Gn - \Gn^\prime))^{1/2}}.
\eeq

Similarly, we describe the NP operator in terms of the toroidal coordinate system. In the toroidal coordinate system, the outward unit normal vector $\Gv_\Bx$ takes the form
$$
\Gv_\Bx = \frac{(\cos \Gn - \Gx) \cos \Gvf}{\Gy(\Gn)} \Be_1 + \frac{(\cos \Gn - \Gx) \sin \Gvf}{\Gy(\Gn)} \Be_2 - \frac{\sqrt{1 - \Gx^2} \sin \Gn}{\Gy(\Gn)} \Be_3,
$$
where $\Be_1$, $\Be_2$ and $\Be_3$ are unit vectors directing to $x$-axis, $y$-axis and $z$-axis in Cartesian coordinates, respectively (see \cite{B}). So, we have
\begin{align}
(\Bx - \By) \cdot \Gv_\Bx =& \left( \frac{R_0 \sqrt{1-\xi^2}\cos \varphi}{\Gy(\Gn)} - \frac{R_0 \sqrt{1-\xi^2}\cos \Gvf'}{\Gy(\Gn')} \right) \frac{(\cos \Gn - \Gx) \cos \Gvf}{\Gy(\Gn)} \nonumber\\
&+ \left( \frac{R_0 \sqrt{1-\xi^2}\sin \varphi}{\Gy(\Gn)} - \frac{R_0 \sqrt{1-\xi^2}\sin \Gvf'}{\Gy(\Gn')} \right) \frac{(\cos \Gn - \Gx) \sin \Gvf}{\Gy(\Gn)} \nonumber\\
&+ \left( \frac{R_0 \xi \sin \eta}{\Gy(\Gn)} - \frac{R_0 \xi \sin \eta'}{\Gy(\Gn')} \right) \frac{\sqrt{1 - \Gx^2} \sin \Gn}{\Gy(\Gn)} \nonumber\\
=& \frac{R_0 \sqrt{1 - \Gx^2} \cos \Gn(1 - \cos (\Gvf - \Gvf')) - R_0 \Gx \sqrt{1 - \Gx^2} (\cos (\Gn - \Gn') - \cos (\Gvf - \Gvf'))}{\Gy(\Gn) \Gy(\Gn')} \nonumber\\
=& \frac{R_0 \Gx \sqrt{1 - \Gx^2} (\Gm(\Gvf - \Gvf') - \cos (\Gn - \Gn'))}{\Gy(\Gn) \Gy(\Gn')} - \frac{R_0 \sqrt{1 - \Gx^2} (1 - \cos (\Gvf - \Gvf'))}{\Gx \Gy(\Gn')} \label{num}.
\end{align}
According to \eqnref{Jacob}, \eqnref{fundamental} and \eqnref{num}, the NP operator $\Kcal^*_{\p \GO}$ defined by \eqnref{introkd} takes the form
\beq
\Kcal^*_{\p \GO}[f](\Gn, \Gvf) = \int_0^{2\pi} \int_0^{2\pi} k(\Gn, \Gn'; \Gvf - \Gvf') f(\Gn', \Gvf')\,d\Gn' d\Gvf',
\eeq
where
\begin{align}
k(\Gn, \Gn'; \Gvf - \Gvf')
&= \dfrac{1 - \Gx^2}{8\pi \sqrt{2} \Gx}
\dfrac{\psi(\Gn)^{1/2}}{\psi(\Gn')^{3/2}} \dfrac{1}{(\Gm(\Gvf - \Gvf') - \cos (\Gn - \Gn'))^{1/2}} \nonumber \\
&\quad - \dfrac{1 - \Gx^2}{8\pi \sqrt{2} \Gx^3}
\dfrac{\psi(\Gn)^{3/2}}{\psi(\Gn')^{3/2}} \dfrac{1 - \cos (\Gvf - \Gvf')}{(\Gm(\Gvf - \Gvf') - \cos (\Gn - \Gn'))^{3/2}}.
\end{align}

\section{Decomposition of the NP operator} \label{sec:Fou}

Suppose that $f$ is of the form
\beq\label{fgrel}
f(\Gn, \Gvf) = \psi(\Gn)^{3/2} g(\Gn) e^{i k \Gvf}.
\eeq
In this case, we have
\begin{align*}
\Kcal^*_{\p \GO}[f](\Gn, \Gvf) =& \int_0^{2\pi} \int_0^{2\pi} k(\Gn, \Gn'; \Gvf - \Gvf') \psi(\Gn')^{3/2} g(\Gn') e^{i k \Gvf'}\,d\Gvf' d\Gn'\\
=& \int_0^{2\pi} \left( \int_0^{2\pi} k(\Gn, \Gn'; \Gvf') e^{-i k \Gvf'}\,d\Gvf' \right) \psi(\Gn')^{3/2} g(\Gn') \,d\Gvf' d\Gn' e^{i k \Gvf}.
\end{align*}
Define
\begin{align}
a_k(\Gn, \Gn') :&= \int_0^{2\pi} k(\Gn, \Gn'; \Gvf') e^{-i k \Gvf'}\,d\Gvf' \dfrac{\psi(\Gn')^{3/2}}{\psi(\Gn)^{3/2}} \dfrac{8\pi \sqrt{2} \Gx}{1 - \Gx^2} \nonumber \\
&= \psi(\Gn)^{-1} \int_0^{2\pi} \dfrac{ e^{-i k \Gvf'}}{(\Gm(\Gvf') - \cos(\Gn - \Gn'))^{1/2}} \,d\Gvf' \nonumber \\
& \quad - \dfrac{1}{\Gx^2} \int_0^{2\pi} \dfrac{ (1 - \cos \Gvf') e^{-i k \Gvf'}}{(\Gm(\Gvf') - \cos(\Gn - \Gn'))^{3/2}} \,d\Gvf', \label{akdef}
\end{align}
and define the operator $\Acal_k$ by
$$
\Acal_k[g](\Gn) := \int_0^{2\pi} a_k(\Gn, \Gn') g(\Gn')\,d\Gn'.
$$
Then, we have
\beq \label{relationKA}
\Kcal^*_{\p \GO}[f](\Gn, \Gvf) = \dfrac{1 - \Gx^2}{8\pi \sqrt{2} \Gx} \psi(\Gn)^{3/2} \Acal_k[g](\Gn) e^{i k \Gvf},
\eeq
which implies the following lemma.

\begin{lemma} \label{main2}
If $\Gl$ is an eigenvalue of $\Acal_k$ with an eigenfunction $g$, then $(1 - \Gx^2) \Gl / 8\pi \sqrt{2} \Gx$ is an eigenvalue of $\Kcal^*_{\p \GO}$ with the eigenfunction of the form \eqnref{fgrel}.
\end{lemma}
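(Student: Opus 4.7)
The plan is to view Lemma~\ref{main2} as a direct consequence of the identity \eqnref{relationKA} derived just above. The substantive work---expanding the kernel $k(\Gn, \Gn'; \Gvf - \Gvf')$ in a Fourier series in $\Gvf - \Gvf'$ and checking that $\Kcal^*_{\p \GO}$ maps a function of the separated form $\psi(\Gn)^{3/2} g(\Gn) e^{ik\Gvf}$ to another function of the same separated form with the same $k$-th Fourier mode in $\Gvf$---has already been carried out prior to the lemma statement. What remains is essentially substitution.

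Concretely, I would fix $\Gl \in \Cbb$ and $g \not\equiv 0$ satisfying $\Acal_k[g] = \Gl g$ on $[0, 2\pi)$, and define
\[
f(\Gn, \Gvf) := \psi(\Gn)^{3/2} g(\Gn) e^{ik\Gvf},
\]
which is exactly of the form \eqnref{fgrel}. Substituting into \eqnref{relationKA} then yields
\[
\Kcal^*_{\p \GO}[f](\Gn, \Gvf) = \frac{1 - \Gx^2}{8\pi \sqrt{2} \Gx} \psi(\Gn)^{3/2} \Acal_k[g](\Gn) e^{ik\Gvf} = \frac{(1 - \Gx^2)\Gl}{8\pi \sqrt{2} \Gx} f(\Gn, \Gvf),
\]
so $f$ is indeed an eigenfunction of $\Kcal^*_{\p \GO}$ with the asserted eigenvalue $(1 - \Gx^2)\Gl/(8\pi\sqrt{2}\Gx)$.

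The only book-keeping items to dispose of are that $f \not\equiv 0$ and that $f$ lives in the appropriate function space on $\p \GO$. The first is immediate because $|f(\Gn, \Gvf)| = \psi(\Gn)^{3/2} |g(\Gn)|$ and $\psi(\Gn) = 1 - \Gx \cos \Gn > 0$ for $0 < \Gx < 1$, so $f \not\equiv 0$ whenever $g \not\equiv 0$. The second follows from the fact that multiplication by the smooth strictly positive function $\psi^{3/2}$ and by the unimodular factor $e^{ik\Gvf}$ preserves the relevant function spaces (e.g., $L^2$ or $H^{-1/2}$) on the torus once one pulls back through the toroidal parametrization. I do not anticipate any real obstacle here: this lemma is a direct corollary of the separation-of-variables computation performed in the preceding paragraphs, and the substantive content of the decomposition lies in establishing \eqnref{relationKA} itself.
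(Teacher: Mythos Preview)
Your proposal is correct and matches the paper's approach exactly: the paper does not even write out a separate proof, stating only that \eqnref{relationKA} ``implies the following lemma,'' which is precisely the substitution argument you spell out. Your additional remarks on $f \not\equiv 0$ and membership in the appropriate function space are reasonable book-keeping that the paper leaves implicit.
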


If $f$ is of the form \eqnref{fgrel}, then we have from \eqnref{singledef2}
\begin{align*}
\Scal_{\p \GO}[f](\Gn, \Gvf) &= \dfrac{R_0 \sqrt{1 - \Gx^2}}{4 \pi \sqrt{2}}\psi(\Gn)^{1/2} \int_0^{2\pi} \int_0^{2\pi} \dfrac{g(\Gn') e^{i k \Gvf'}\,d\Gn' d\Gvf'}{(\Gm(\Gvf - \Gvf') - \cos(\Gn - \Gn'))^{1/2} } \\
&= \dfrac{R_0 \sqrt{1 - \Gx^2}}{4 \pi \sqrt{2}} \psi(\Gn)^{1/2} \int_0^{2\pi} \left( \int_0^{2\pi} \dfrac{ e^{-i k \Gvf'}\,d\Gvf'}{(\Gm(\Gvf') - \cos(\Gn - \Gn'))^{1/2}} \right) g(\Gn')\,d\Gn' e^{i k \Gvf}.
\end{align*}
Define
\beq
s_k(\Gn) := \int_0^{2\pi} \dfrac{ e^{-i k \Gvf'}\,d\Gvf'}{(\Gm(\Gvf') - \cos \Gn)^{1/2}}
\eeq
and
\beq
\Scal_k[g](\Gn) := \int_0^{2\pi} s_k(\Gn - \Gn') g(\Gn') \,d\Gn'.
\eeq
Then, we have
\beq\label{singlerela}
\Scal_{\p \GO}[f](\Gn, \Gvf) = \dfrac{R_0 \sqrt{1 - \Gx^2}}{4 \pi \sqrt{2}} \psi(\Gn)^{1/2} \Scal_k[g](\Gn) e^{i k \Gvf}.
\eeq

Suppose that
\beq\label{fj12}
f_j(\Gn, \Gvf) = \psi(\Gn)^{3/2} g_j(\Gn) e^{i k_j \Gvf}, \quad j = 1, 2.
\eeq
If $f_1$ and $f_2$ are smooth, then we have from \eqnref{innerproduct} and \eqnref{Jacob} that
\begin{align*}
\la f_1, f_2 \ra_* = \int_0^{2\pi} \int_0^{2\pi} f_1(\Gn, \Gvf) \ol{\Scal_{\p \GO}[f_2](\Gn, \Gvf)} \dfrac{R_0^2 \Gx \sqrt{1 - \Gx^2}}{\Gy(\Gn)^2}\,d\Gn d\Gvf.
\end{align*}
It then follows from \eqnref{singlerela} that
\begin{align}
\la f_1, f_2 \ra_* &= \dfrac{{R_0}^3 \Gx(1 - \Gx^2)}{4 \pi \sqrt{2}} \int_0^{2 \pi} e^{i (k_1 - k_2)\Gvf}\,d\Gvf \int_0^{2\pi} g_1(\Gn) \ol{\Scal_{k_2}[g_2](\Gn)} \,d\Gn \nonumber\\
&= \frac{{R_0}^3 \Gx (1 - \Gx^2)}{2 \sqrt{2} } \Gd_{k_1 k_2}\int_0^{2\pi} g_1(\Gn) \ol{\Scal_{k_2}[g_2](\Gn)} \,d\Gn,  \label{innerrela}
\end{align}
where $\Gd_{k_1 k_2}$ is the Kronecker's delta.

Let $T$ be the unit circle and let $H^{s}(T^2)$ be the Sobolev space on the torus $T^2$ equipped with the norm
\beq
\| f \|^2_{s, T^2} := \sum_{k,l =-\infty}^\infty (1+ |k|^2 + |l|^2)^{s} | \hat{f}(k,l)|^2,
\eeq
where $\hat{f}(k,l)$ denotes the double Fourier coefficient of $f$. Then, one can see easily that $H^{s}(\p\GO)$ is equivalent to $H^{s}(T^2)$ for $s=0$ and $s=1$. Then, by interpolation between $s=0$ and $s=1$, we see that $H^{1/2}(\p\GO)$ is equivalent to $H^{1/2}(T^2)$, and hence by duality $H^{-1/2}(\p\GO)$ is equivalent to $H^{-1/2}(T^2)$. Since $\la f, f \ra_*$ is equivalent to $\| f \|_{H^{-1/2}(\p\GO)}$, there is $C>1$ such that
\beq\label{ffequiv}
C^{-1} \| f \|^2_{-1/2, T^2} \le \la f, f \ra_* \le C \| f \|^2_{-1/2, T^2}.
\eeq
It is worthwhile mentioning that $C$ depends on the parameter $\xi$.
We now introduce a Hilbert space on the unit circle: Let $H^{-1/2}(T)$ be the Sobolev space of order $-1/2$ on the unit circle $T$ whose norm is given by
\beq
\| \Gvf \|_{-1/2}^2: = \sum_{l =-\infty}^\infty (1+ |l|^2)^{-1/2} | \hat{\Gvf}(l)|^2,
\eeq
where $\hat{\Gvf}(l)$ denotes the $l$-th Fourier coefficient. We then define $H(T)$ by
\beq
H(T):= \{ g ~|~ \psi^{3/2} g \in H^{-1/2}(T) \}.
\eeq
Then $H(T)$ is a Hilbert space with the norm
\beq
\| g\|_H:= \| \psi^{3/2} g \|_{-1/2}.
\eeq
For $g \in H(T)$, define $f$ by \eqnref{fgrel}. Then there is a constant $C_k$ depending on $k$ such that
$$
C_k^{-1} \| g\|_H \le \| f \|_{-1/2, T^2} \le C_k \| g\|_H.
$$
It then follows from \eqnref{ffequiv} that
\beq\label{equiv1}
C_k^{-1} \| g\|_H \le \| f \|_{H^{-1/2}(\p\GO)} \le C_k \| g\|_H
\eeq
with some different $C_k$.

Since $\Scal_{\p\GO}$ maps $H^{-1/2}(\p\GO)$ into its dual space $H^{1/2}(\p\GO)$ continuously, the relation \eqnref{innerrela} shows that $\Scal_k$ maps $H(T)$ into its dual space $H'(T)$ continuously. Thus we can define
\beq\label{kinner}
\la g_1, g_2 \ra_k := \int_0^{2\pi} g_1(\Gn) \ol{\Scal_k[g_2](\Gn)} \,d\Gn, \quad g_1, g_2 \in H(T),
\eeq
understanding the right-hand side as the $H-H'$ pairing.

\begin{prop}
For each integer $k$, $\la \cdot, \cdot \ra_k$ is an inner product on $H(T)$ and  there is a constant $C_k>1$ depending on $k$ such that
\beq\label{equiv2}
C_k^{-1} \| g\|_H \le \la g, g \ra_k \le C_k \| g\|_H
\eeq
for all $g \in H$. Moreover, $\Acal_k$ is compact and self-adjoint on $H(T)$:
\beq\label{symmetryk}
\la \Acal_k[g_1], g_2 \ra_k = \la g_1, \Acal_k[g_2] \ra_k.
\eeq
\end{prop}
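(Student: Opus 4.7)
The plan is to exploit the identification $\iota_k : g \mapsto f := \psi^{3/2} g e^{i k \Gvf}$, which by \eqnref{equiv1} is a bi-Lipschitz linear embedding of $H(T)$ into $H^{-1/2}(\p\GO)$; since $H(T)$ is complete, its image $V_k$ is therefore a closed subspace. Under this identification, \eqnref{innerrela} shows that $\la \cdot, \cdot \ra_k$ is, up to the positive constant $c_0 := R_0^3 \Gx (1 - \Gx^2)/(2\sqrt{2})$, the restriction of $\la \cdot, \cdot \ra_*$ to $V_k$, and \eqnref{relationKA} shows that $\Acal_k$ corresponds, up to the positive constant $c_1 := (1 - \Gx^2)/(8\pi\sqrt{2}\Gx)$, to the restriction of $\Kcal^*_{\p \GO}$ to $V_k$. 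The four assertions of the proposition will then be transported back from the established properties of $\la \cdot, \cdot \ra_*$ and $\Kcal^*_{\p \GO}$ on $H^{-1/2}(\p\GO)$.

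For the inner-product claim I would set $f_1 = f_2 = f$ and $k_1 = k_2 = k$ in \eqnref{innerrela} to obtain $\la f, f\ra_* = c_0 \la g, g\ra_k$. Since $\la \cdot, \cdot \ra_*$ is a genuine inner product on $H^{-1/2}(\p\GO)$ and $g \neq 0$ forces $f \neq 0$, this gives positive-definiteness of $\la \cdot, \cdot\ra_k$; sesquilinearity and conjugate symmetry in $H(T)$ are inherited from $\la \cdot, \cdot\ra_*$ because $\iota_k$ is complex-linear. Combining $\la f, f\ra_* = c_0 \la g, g\ra_k$ with \eqnref{ffequiv} and \eqnref{equiv1} then produces the norm equivalence \eqnref{equiv2}.

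For self-adjointness, write $f_j = \iota_k g_j$ for $j = 1,2$; by \eqnref{relationKA} we have $\Kcal^*_{\p\GO}[f_j] = c_1 \iota_k(\Acal_k[g_j])$, so $\Kcal^*_{\p\GO}$ preserves the ansatz \eqnref{fgrel}. Applying \eqnref{innerrela} to both sides of \eqnref{symmetry} gives
\[
\la \Kcal^*_{\p \GO}[f_1], f_2 \ra_* = c_0 c_1 \la \Acal_k[g_1], g_2 \ra_k, \qquad \la f_1, \Kcal^*_{\p \GO}[f_2] \ra_* = c_0 c_1 \la g_1, \Acal_k[g_2] \ra_k,
\]
which immediately yields \eqnref{symmetryk}. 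For compactness, since the torus $\p\GO$ is smooth, $\Kcal^*_{\p\GO}$ is compact on $H^{-1/2}(\p\GO)$; as $V_k$ is closed and $\Kcal^*_{\p\GO}$-invariant, $\Kcal^*_{\p\GO}|_{V_k}$ is compact on $V_k$, and transporting it through $\iota_k^{-1}$ yields compactness of $\Acal_k$ on $H(T)$.

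The only technical points requiring care are, first, justifying that \eqnref{innerrela}, derived for smooth $f_1, f_2$, extends to arbitrary $g_1, g_2 \in H(T)$ by density together with the boundedness of $\Scal_{\p\GO}: H^{-1/2}(\p\GO) \to H^{1/2}(\p\GO)$ and of $\Scal_k : H(T) \to H(T)'$; and, second, the identification of $V_k$ as a closed $\Kcal^*_{\p\GO}$-invariant subspace, which I obtain directly from the bi-Lipschitz property of $\iota_k$ together with \eqnref{relationKA}. Beyond these routine verifications, the proof is a mechanical transfer of structure along $\iota_k$.
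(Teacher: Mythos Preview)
Your proof is correct and follows essentially the same approach as the paper: both transport the inner-product, norm-equivalence, self-adjointness, and compactness properties from $\la\cdot,\cdot\ra_*$ and $\Kcal^*_{\p\GO}$ on $H^{-1/2}(\p\GO)$ to $\la\cdot,\cdot\ra_k$ and $\Acal_k$ on $H(T)$ via the identification $g\mapsto \psi^{3/2} g\, e^{ik\Gvf}$ and the relations \eqnref{relationKA} and \eqnref{innerrela}. Your version is slightly more explicit about the functional-analytic scaffolding (naming $V_k$ as a closed invariant subspace, noting the density argument needed to extend \eqnref{innerrela}), while the paper carries out the self-adjointness computation by expanding $\Scal_{\p\GO}\Kcal^*_{\p\GO}[f_2]$ directly, but these are cosmetic differences.
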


\pf
For $g \in H(T)$, define $f$ by \eqnref{fgrel}. It then follows from \eqnref{innerrela} that
\beq
\la f, f \ra_* = \frac{{R_0}^3 \Gx (1 - \Gx^2)}{2 \sqrt{2} } \la g, g \ra_k.
\eeq
One can easily see from this relation that $\la \cdot, \cdot \ra_k$ is an inner product on $H(T)$. Moreover, since $\la f, f \ra_*$ is equivalent to $\| f \|_{H^{-1/2}(\p\GO)}$, \eqnref{equiv2} follows from \eqnref{equiv1}. Since $\Kcal^*_{\p \GO}$ is compact on $H^{-1/2}(\p\GO)$, \eqnref{relationKA} and \eqnref{equiv1} show that $\Acal_k$ is compact on $H(T)$.

Now we prove that $\Acal_k$ is self-adjoint on $H(T)$. Let $f_1$ and $f_2$ be of the form \eqnref{fj12} with $k_1=k_2=k$. Then \eqnref{symmetry} reads
\beq \label{Plemelj}
\int_{\p \GO} f_1 \ol{\Scal_{\p \GO}[\Kcal^*_{\p \GO}[f_2]]}\,d\Gs = \int_{\p \GO} \Kcal^*_{\p \GO}[f_1] \ol{\Scal_{\p \GO}[f_2]}\,d\Gs.
\eeq
Then, we have from \eqnref{singlerela} 
$$
\Scal_{\p \GO} \Kcal^*_{\p \GO}[f_2](\Gn, \Gvf) = \dfrac{R_0(1 - \Gx^2)^{3/2}}{64 \pi^2 \Gx} \psi(\Gn)^{1/2} \Scal_k[\Acal_k[g_2]](\Gn) e^{i k \Gvf}.
$$
Thus, 
\begin{align*}
\int_{\p \GO} f_1 \ol{\Scal_{\p \GO}[\Kcal^*_{\p \GO}[f_2]]}\,d\Gs =& \dfrac{{R_0}^3 (1 - \Gx^2)^2}{32 \pi} \int_0^{2\pi} g_1(\Gn) \ol{\Scal_k[\Acal_k[g_2]](\Gn)} \,d\Gn\\
=& \dfrac{{R_0}^3 (1 - \Gx^2)^2}{32 \pi} \la g_1, \Acal_k[g_2] \ra_k,
\end{align*}
and 
\begin{align*}
\nonumber \int_{\p \GO} \Kcal^*_{\p \GO}[f_1] \ol{\Scal_{\p \GO}[f_2]}\,d\Gs =&\dfrac{{R_0}^3 (1 - \Gx^2)^2}{32 \pi} \int_0^{2\pi} \Acal_k[g_1](\Gn) \ol{\Scal_k[g_2](\Gn)} \,d\Gn\\
=&\dfrac{{R_0}^3 (1 - \Gx^2)^2}{32 \pi} \la \Acal_k[g_1], g_2 \ra_k,
\end{align*}
from which \eqnref{symmetryk} follows. This completes the proof.
\qed

\section{Numerical range of $\Acal_k$ and the proof of Theorem \ref{main1}} \label{sec:main}

In this section, we prove the following theorem.

\begin{theorem}\label{main3}
For all $0 < \Gx < 1$, there exists a positive integer $k_0$ such that $\Acal_k$ has both positive and negative eigenvalues for all $k \in \Zbb$ with $|k| > k_0$.
\end{theorem}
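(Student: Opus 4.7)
Since $\Acal_k$ is self-adjoint and compact on $(H(T),\la\cdot,\cdot\ra_k)$, its extremal eigenvalues coincide with the supremum and infimum of its numerical range. Consequently it is enough, for each $|k|$ large, to exhibit two test functions $g_\pm\in H(T)$ satisfying $\la\Acal_k g_+,g_+\ra_k>0$ and $\la\Acal_k g_-,g_-\ra_k<0$. The plan is to take $g_\pm$ as wave packets localized in $\Gn$ near chosen base points, and to evaluate the quadratic forms by applying the method of stationary phase to the oscillatory $\Gvf'$-integrals entering the kernels of $\Acal_k$ and $\Scal_k$.

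\textbf{Asymptotic analysis of the kernel.} The kernel $a_k(\Gn,\Gn')$ in \eqnref{akdef} is a difference of two oscillatory integrals
\[
I_j(t)=\int_0^{2\Gp}\frac{h_j(\Gvf')\,e^{-ik\Gvf'}}{(\Gm(\Gvf')-\cos t)^{j-1/2}}\,d\Gvf',\qquad j=1,2,
\]
with $t=\Gn-\Gn'$, $h_1\equiv 1$ and $h_2=1-\cos\Gvf'$. Because $\Gm(\Gvf')\ge 1\ge\cos t$ with equality only at $(\Gvf',t)=(0,0)$, repeated integration by parts in $\Gvf'$ shows that each $I_j(t)$ decays faster than any power of $|k|$ uniformly on $\{|t|\ge\Gd\}$ for every $\Gd>0$. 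Near the singular point one has the Morse-type expansion
\[
\Gm(\Gvf')-\cos t=\tfrac12\bigl(\Ga^2\Gvf'^{\,2}+t^2\bigr)+O(\Gvf'^{\,4}+t^4),\qquad \Ga=\sqrt{1-\Gx^2}/\Gx;
\]
after rescaling $\Gvf'\mapsto\Gvf'/|k|$, the leading part of each $I_j$ reduces to explicit integrals evaluable in terms of the modified Bessel functions $K_0$ and $K_1$, and an analogous procedure yields asymptotics for the kernel $s_k$ of $\Scal_k$.

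\textbf{Producing both signs.} Based on these asymptotics, I test with wave packets of the form
\[
g(\Gn)=e^{im\Gn}\chi\bigl((\Gn-\Gn_0)\sqrt{|k|}\bigr),
\]
concentrated at a base point $\Gn_0\in[0,2\Gp)$ with a secondary frequency $m\in\Zbb$ and a fixed smooth cut-off $\chi$. The length scale $|k|^{-1/2}$ matches the stationary-phase scale, so the expression $\la\Acal_k g,g\ra_k=\int\Acal_k[g](\Gn)\,\overline{\Scal_k[g](\Gn)}\,d\Gn$ reduces to a local integral at $\Gn_0$ whose leading term is an explicit function of $\Gx$, $\Gy(\Gn_0)$, $m$ and $k$. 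Since $a_k$ is the difference of the $I_1$-piece (which carries the strongly varying prefactor $\Gy(\Gn)^{-1}$) and the $I_2$-piece, tuning $\Gn_0$ and $m$ allows either piece to dominate, thereby realizing both positive and negative values of the numerical range; heuristically the sign encodes the local Gaussian curvature of the torus, positive on the outer equator and negative on the inner one.

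\textbf{Main obstacle.} The principal difficulty is to produce stationary-phase estimates with error terms uniform in $\Gn_0$, $m$ and the cut-off parameter, sharp enough that the explicit leading local quantity really controls the sign of $\la\Acal_k g,g\ra_k$ after normalization by $\la g,g\ra_k$. The non-convolution factor $\Gy(\Gn)^{-1}$ in the first summand of $a_k$ must be Taylor-expanded about $\Gn_0$ and the remainder shown to be of lower order on the wave-packet scale; moreover, one must verify that $g\in H(T)$ with $\la g,g\ra_k$ of the expected size, so that the chosen leading sign is not destroyed by the denominator in the numerical range quotient.
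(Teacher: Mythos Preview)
Your proposal is a plan rather than a proof: you explicitly flag the ``main obstacle'' and never carry out the stationary-phase computation that would determine the sign of the leading term, so as it stands the argument has a genuine gap.

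More importantly, you are missing a structural simplification that removes precisely the obstacle you describe. The operator $\Scal_k$ is a \emph{convolution} on the circle, so the pure exponentials $g_l(\Gn)=e^{il\Gn}$ are its eigenfunctions: $\Scal_k[g_l]=s_{k,l}(\Gx)\,g_l$ with $s_{k,l}(\Gx)>0$ by \eqnref{sklposi}. There is no need for wave packets, cut-offs, or Taylor expansion of $\Gy(\Gn)^{-1}$; the normalization $\la g_l,g_l\ra_k=2\Gp\,s_{k,l}(\Gx)$ is explicit and positive. Since $a_k(\Gn,\Gn')=\Gy(\Gn)^{-1}s_k(\Gn-\Gn')-\Gx\,\p_\Gx s_k(\Gn-\Gn')$, one obtains the closed formula
\[
\la \Acal_k[g_l],g_l\ra_k=\frac{2\Gp\,s_{k,l}(\Gx)}{\sqrt{1-\Gx^2}}\Bigl(s_{k,l}(\Gx)-\Gx\sqrt{1-\Gx^2}\,s_{k,l}'(\Gx)\Bigr),
\]
so the sign question reduces to that of the single scalar $I_{k,l}(\Gx):=s_{k,l}(\Gx)-\Gx\sqrt{1-\Gx^2}\,s_{k,l}'(\Gx)$, a concrete double oscillatory integral in $(\Gn,\Gvf)$. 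One application of two-dimensional stationary phase (after passing to polar coordinates, with phase $-r\cos\Gt$ and critical points $(0,\pm\Gp/2)$) gives $I_{k,0}(\Gx)=2\sqrt{2}\Gp/k+o(1/k)>0$ for $k$ large, and, for each fixed $k$, $I_{k,l}(\Gx)=-2\sqrt{2}\Gp\,\Gx(1-\sqrt{1-\Gx^2})/\bigl((1-\Gx^2)l\bigr)+o(1/l)<0$ for $l$ large. Both signs of the numerical range are thus realized for every $|k|>k_0$.

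Note also that your heuristic of selecting the sign by moving the base point $\Gn_0$ between the outer and inner equators is not what actually drives the sign change: in the computation above the negative sign comes from sending the \emph{secondary frequency} $l$ to infinity with $k$ fixed, not from localization in $\Gn$. Your wave-packet scale $|k|^{-1/2}$ is tied to $k$, so it is unclear how your construction would access the large-$l$ regime needed for the negative sign at a given $k$.
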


Theorem \ref{main1} follows from Theorem \ref{main3}. In fact, by Lemma \ref{main2}, positive and negative eigenvalues of $\Acal_k$ yield positive and negative eigenvalues of $\Kcal_{\p \GO}^*$, respectively. Moreover, since eigenfunctions of $\Kcal_{\p \GO}^*$ take the form \eqnref{fgrel}, eigenfunctions corresponding to different $k$ are orthogonal to each other (see \eqnref{innerrela}). Since $\Kcal^*_{\p \GO}$ is compact, multiplicity of each eigenvalue is finite. Thus there must be infinitely many positive and negative eigenvalues.

To prove Theorem \ref{main3}, we show that the numerical range $\la \Acal_k[g], g \ra_k$ of $\Acal_k$ has both positive and negative values. Since $\Acal_k$ is self-adjoint, it means that there are both positive and negative eigenvalues.

Note that $a_k(\Gn, \Gn')$ can be written as
$$
a_k(\Gn, \Gn') = \psi(\Gn)^{-1} s_k(\Gn - \Gn') - \Gx \dfrac{\p}{\p \Gx} s_k(\Gn - \Gn').
$$
Thus, we have
\beq
\Acal_k[g](\Gn) = \Gy(\Gn)^{-1} \Scal_k[g](\Gn) - \Gx \dfrac{\p}{\p \Gx} \Scal_k[g](\Gn).
\eeq
Let $g_l(\Gn) := e^{i l \Gn}$. Then
\beq
\Scal_k[g_l](\Gn) = s_{k, l}(\Gx) e^{i l \Gn},
\eeq
where
\beq
s_{k, l}(\Gx) := \int_0^{2\pi} \int_0^{2\pi} \dfrac{e^{-i k \Gvf'} e^{-i l \Gn'}}{(\Gm(\Gvf') - \cos \Gn')^{1/2}}\,d\Gn' d\Gvf'.
\eeq
Thanks to \eqnref{equiv2}, we have
\beq\label{sklposi}
s_{k, l}(\Gx) >0 \quad\mbox{for all } k, l \in \Zbb, \ \ 0 < \Gx < 1.
\eeq

Since
\begin{align*}
\int_0^{2\pi} \frac{1}{\psi(\Gn)} \, d\Gn = \frac{2\pi}{\sqrt{1 - \Gx^2}},
\end{align*}
we have
\begin{align}\label{inner}
\la \Acal_k[g_l], g_l \ra_k =& s_{k, l}(\Gx) \int_0^{2\pi} \left[ \Gy(\Gn)^{-1} s_{k, l}(\Gx) - \Gx s_{k, l}'(\Gx) \right] \, d\Gn \nonumber \\
=& \frac{2\pi s_{k, l}(\Gx)}{\sqrt{1 - \Gx^2}} (s_{k, l}(\Gx) - \Gx \sqrt{1 - \Gx^2} s_{k, l}'(\Gx)).
\end{align}

We will investigate the sign of $\la \Acal_k[g_l], g_l \ra_k$. Thanks to \eqnref{sklposi}, it is enough to look into the quantity $I_{k,l}(\Gx)$ defined by
\beq
I_{k,l}(\Gx) := s_{k, l}(\Gx) - \Gx \sqrt{1 - \Gx^2} s_{k, l}'(\Gx).
\eeq
Observe that
\begin{align*}
I_{k,l}(\Gx)
= &\int_0^{2\pi} \int_0^{2\pi} \dfrac{1-\sqrt{1-\Gx^2} -(1 - \Gx^2 - \sqrt{1-\Gx^2} )\cos \Gvf  - \Gx^2 \cos{\Gn}}{\Gx^2 (\Gm(\Gvf) - \cos \Gn)^{3/2} } e^{-i k \Gvf} e^{-i l \Gn} \,d\Gn d\Gvf. \\
= &\int_{-\Gp}^{\Gp} \int_{-\Gp}^{\Gp} \dfrac{1-\sqrt{1-\Gx^2} -(1 - \Gx^2 - \sqrt{1-\Gx^2} )\cos \Gvf  - \Gx^2 \cos{\Gn}}{\Gx^2 (\Gm(\Gvf) - \cos \Gn)^{3/2} } e^{-i k \Gvf} e^{-i l \Gn} \,d\Gn d\Gvf.
\end{align*}
The second identity holds because the integrand is $2\Gp$-periodic with respect to both $\Gvf$ and $\Gn$. We also mention that
\beq\label{1000}
I_{k,l}(\Gx) = I_{-k,l}(\Gx) = I_{k,-l}(\Gx),
\eeq
so in what follows we only consider nonnegative $k$ and $l$.

To estimate $I_{k,l}(\Gx)$ we use the stationary phase method, which we recall now (see, e.g., \cite{H}).

\begin{theorem}[Stationary phase approximation]\label{SPM}
Let $D$ be a bounded domain in $\Rbb^d$, and let $h$ and $\Psi$ be $C^\infty$ functions on $\ol{D}$ such that all critical points of $\Psi$ are non-degenerate, i.e., the Hessian $H_\Psi(x_0)$ of $\Psi$ is non-singular at every $x_0 \in D$ such that $\nabla \Psi(x_0)=0$. Let $\Sigma$ be the set of critical points of $\Psi$. If there is no critical point of $\Psi$ on $\p D$, then the following asymptotic formula as $n \to \infty$ holds:
\begin{align}
    &\int_D  h(x) e^{i n \Psi(x)}\,dx \nonumber \\
    &= \sum_{x_0 \in \Sigma} e^{i n \Psi(x_0)} \left| \det H_\Psi(x_0) \right|^{- 1 / 2} e^{(i \pi / 4) \operatorname{sign}(H_\Psi(x_0))} \left( \frac{2 \pi}{n} \right)^{d / 2} h(x_0) + o(n^{- d / 2}) , \label{eq:4}
\end{align}
where $\operatorname{sign}(A)$ for a matrix $A$ is defined to be
$$
\operatorname{sign}(A) := \#\{ \mbox{positive eigenvalues of }A \} - \#\{ \mbox{negative eigenvalues of }A \}.
$$
\end{theorem}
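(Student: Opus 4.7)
The plan is to localize the integral via a partition of unity, bound the contribution away from critical points by iterated integration by parts, and reduce the contribution near each critical point to a product of one-dimensional Fresnel integrals by means of the Morse lemma. Since non-degenerate critical points are isolated and $\overline D$ is compact with $\GS \cap \p D = \emptyset$, the set $\GS$ is finite. Choose disjoint open balls $U_{x_0} \Subset D$ for $x_0 \in \GS$ and cutoffs $\chi_{x_0} \in C_c^\infty(U_{x_0})$ with $\chi_{x_0} \equiv 1$ near $x_0$, and set $\chi_0 := 1 - \sum_{x_0 \in \GS}\chi_{x_0}$, so that $|\nabla \Psi| \ge c > 0$ on $\mathrm{supp}(\chi_0) \cap \overline D$. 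The first-order operator $L := (i n |\nabla \Psi|^2)^{-1}\, \nabla \Psi \cdot \nabla$ satisfies $L e^{i n \Psi} = e^{i n \Psi}$, so integrating by parts $N$ times yields
\[
\int_D \chi_0 h\, e^{i n \Psi}\, dx = \int_D (L^\ast)^N[\chi_0 h]\, e^{i n \Psi}\, dx = O(n^{-N})
\]
for every $N$, which is $o(n^{-d/2})$ as required.

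Next, for each $x_0 \in \GS$, the Morse lemma provides a smooth diffeomorphism $y = \phi(x)$ from a neighborhood of $x_0$ to a neighborhood of $0$ such that $\Psi(\phi^{-1}(y)) = \Psi(x_0) + \tfrac12 \langle A y, y\rangle$, where $A = \mathrm{diag}(\Ge_1,\ldots,\Ge_d)$, $\Ge_j \in \{\pm 1\}$, has signature $\mathrm{sign}(H_\Psi(x_0))$. Differentiating this identity twice at $x_0$ gives $(d\phi(x_0))^T A\, d\phi(x_0) = H_\Psi(x_0)$, hence $|\det d\phi(x_0)| = |\det H_\Psi(x_0)|^{1/2}$. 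After shrinking the cutoff so $\mathrm{supp}(\chi_{x_0})$ lies in the chart, the local contribution takes the form
\[
e^{i n \Psi(x_0)} \int_{\Rbb^d} \tilde h(y)\, \tilde\chi(y)\, e^{i n \langle A y, y\rangle / 2}\, dy,
\]
with $\tilde h(y) := h(\phi^{-1}(y))\,|\det d\phi^{-1}(y)|$ and $\tilde\chi \in C_c^\infty(\Rbb^d)$ equal to $1$ near $0$.

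I would then Taylor-expand $\tilde h(y) = \tilde h(0) + \sum_j y_j r_j(y)$ with $r_j$ smooth. Since $A$ is diagonal, the leading integral factors across coordinates and the problem reduces to the one-dimensional Fresnel estimate
\[
\int_\Rbb \Gr(t)\, e^{i \Ge n t^2 / 2}\, dt = e^{i\Gp \Ge / 4}\sqrt{\frac{2\Gp}{n}}\, \Gr(0) + O(n^{-3/2}), \qquad \Ge = \pm 1, \ \Gr \in C_c^\infty(\Rbb),
\]
which follows from analytic continuation of the Gaussian formula $\int_\Rbb e^{-\Ga t^2}\,dt = \sqrt{\Gp/\Ga}$ to $\Ga = -i\Ge n/2$, combined with one integration by parts applied to $\Gr(t) - \Gr(0) = t \Gr_1(t)$. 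Taking the $d$-fold product, using $\sum_j \Ge_j = \mathrm{sign}(H_\Psi(x_0))$, and substituting $\tilde h(0) = h(x_0)|\det H_\Psi(x_0)|^{-1/2}$, produces precisely the leading term claimed in \eqref{eq:4}. The remainder pieces are controlled by writing $y_j e^{i n \langle Ay, y\rangle / 2} = (i n \Ge_j)^{-1}\, \p_{y_j} e^{i n \langle Ay, y\rangle / 2}$ and integrating by parts once, which gains a factor $n^{-1}$; each such term is thus $O(n^{-d/2 - 1}) = o(n^{-d/2})$.

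The main obstacle is the bookkeeping of absolute values, Jacobians, and signature factors in the Morse chart: one must verify that $|\det d\phi^{-1}(0)| = |\det H_\Psi(x_0)|^{-1/2}$ combines with the product of Fresnel amplitudes $\prod_j \sqrt{2\Gp/n}$ to give exactly $|\det H_\Psi(x_0)|^{-1/2}(2\Gp/n)^{d/2}$, and that the product of Fresnel phases $\prod_j e^{i\Gp \Ge_j/4}$ equals $e^{i\Gp\, \mathrm{sign}(H_\Psi(x_0))/4}$. Both follow from $(d\phi(x_0))^T A\, d\phi(x_0) = H_\Psi(x_0)$ and the definition $\mathrm{sign}(A) = \#\{\Ge_j=+1\} - \#\{\Ge_j=-1\}$, but assembling the constants without error is the step that requires the most care.
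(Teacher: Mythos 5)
The paper does not actually prove Theorem \ref{SPM}; it states it as a recalled fact with a pointer to H\"ormander's book \cite{H}, so there is no in-paper argument to compare against. Your sketch is the standard textbook proof (partition of unity $+$ non-stationary phase by iterated integration by parts $+$ Morse lemma $+$ Fresnel integrals), and the bookkeeping of Jacobians and signature factors in the Morse chart is handled correctly: $(d\phi(x_0))^{T}A\,d\phi(x_0)=H_\Psi(x_0)$ indeed gives $|\det d\phi^{-1}(0)|=|\det H_\Psi(x_0)|^{-1/2}$, and $\prod_j e^{i\pi\varepsilon_j/4}=e^{i\pi\,\mathrm{sign}(H_\Psi(x_0))/4}$.

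There is, however, a genuine gap in the non-stationary part, and it is worth flagging because it also exposes an imprecision in the statement itself. You set $\chi_0:=1-\sum_{x_0}\chi_{x_0}$, so $\chi_0\equiv 1$ in a neighbourhood of $\partial D$, and then assert
\begin{equation*}
\int_D \chi_0 h\,e^{in\Psi}\,dx=\int_D (L^{*})^{N}[\chi_0 h]\,e^{in\Psi}\,dx=O(n^{-N}).
\end{equation*}
That identity requires the boundary terms from integration by parts to vanish, which they do not, since $\chi_0 h$ does not vanish on $\partial D$. Each integration by parts contributes a boundary integral of the form $\tfrac{1}{in}\int_{\partial D}(\cdot)\,e^{in\Psi}\,d\sigma$, and the best one can say without further hypotheses is that this is $O(n^{-1})$; that is $o(n^{-d/2})$ only when $d=1$. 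Indeed the statement as written is false for $d\ge 2$: take $D=(0,1)^2$, $\Psi(x_1,x_2)=x_1$, $h\equiv 1$. There are no critical points in $\overline D$, yet $\int_D e^{inx_1}\,dx=(e^{in}-1)/(in)$, which is of size $n^{-1}=n^{-d/2}$, not $o(n^{-d/2})$. H\"ormander's theorem (Thm.\ 7.7.5 of \cite{H}) avoids this by assuming the amplitude is in $C_c^\infty$. To make your argument (and the theorem) correct one should either assume $h\in C_c^\infty(D)$, or note that in the paper's application the integrand is $2\pi$-periodic in both variables so the integral is really over the closed torus $\mathbb{T}^2$, where there is no boundary and your integration by parts is clean. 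In that periodic setting the remainder of your proof goes through; you may also want to remark that the cutoff $\tilde\chi$ near a critical point should be taken to be a product $\prod_j\tilde\chi_j(y_j)$ so that the leading Gaussian integral genuinely factors across coordinates.
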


\begin{proof}[Proof of Theorem \ref{main3}]
We rewrite the integral $I_{k, l}(\Gx)$ in terms of the polar coordinates. Let $D := (-\pi, \pi) \times (-\pi, \pi)$. We introduce the polar coordinates for $D$ by
$$
(\Gvf, \Gn) = (r \cos \Gt, r \sin \Gt), \quad 0 < r < R(\Gt), \quad -\pi \leq \Gt < \pi, \quad (\Gvf, \Gn) \in D,
$$
where
$$
R(\Gt) :=
\begin{cases}
\ds \frac{\pi}{|\cos \Gt|}, \quad &\mbox{if } \ds -\pi \leq \Gt < -\frac{3}{4} \pi, \  -\frac{1}{4} \pi \leq \Gt < \frac{1}{4} \pi, \ \frac{3}{4} \pi \leq \Gt < \pi,\\
\nm
\ds \frac{\pi}{|\sin \Gt|}, \quad & \mbox{otherwise}.
\end{cases}
$$
Then, we have
$$
I_{k, l}(\Gx) = \int_{-\pi}^{\pi} \int_0^{R(\Gt)} h(r, \Gt) e^{-i kr \cos \Gt} e^{-i lr \sin \Gt} \,dr d\Gt,
$$
where
$$
h(r, \Gt) :=
\begin{cases}
\dfrac{ |r| \{1-\sqrt{1-\Gx^2} -(1 - \Gx^2 - \sqrt{1-\Gx^2} )\cos (r \cos \Gt)  - \Gx^2 \cos(r \sin \Gt) \}}{\Gx^2 (\Gm(r \cos \Gt) - \cos ( r \sin \Gt))^{3/2} }, \quad r \neq 0,\\
\nm
\dfrac{\sqrt{2}\{\Gx(1 - \Gx^2 - \sqrt{1 - \Gx^2}) \cos^2 \Gt + \Gx^3 \sin^2 \Gt \}}{((1 - \Gx^2) \cos^2 \Gt + \Gx^2 \sin^2 \Gt)^{3/2}}, \quad r = 0.
\end{cases}
$$
One can easily see that $h$ is a $C^\infty$ function on $\Rbb^2$, an even function with respect to both $r$ and $\Gt$, and $\pi$-periodic in $\Gt$.

By changing variables of integration $r' = -r$ and $\Gt' = \Gt - \pi$, we have
$$
I_{k, l}(\Gx) = \int_{-\pi}^{\pi} \int_{-R(\Gt')}^0 h(r', \Gt') e^{-i kr' \cos \Gt'} e^{-i lr' \sin \Gt'} \,dr' d\Gt'.
$$
Here, we have made use of $\pi$-periodicity of $R$ and $h(r, \cdot)$. So, we also have
\beq \label{Ikl}
I_{k, l}(\Gx) = \frac{1}{2} \int_{-\pi}^{\pi} \int_{-R(\Gt)}^{R(\Gt)} h(r, \Gt) e^{-i kr \cos \Gt} e^{-i lr \sin \Gt} \,dr d\Gt.
\eeq

Now we are ready to investigate signs of the integral $I_{k, l}(\Gx)$. First assume that $l=0$. We apply Theorem \ref{SPM} with the phase function $\Psi(r, \Gt) = -r \cos \Gt$. The critical points of $\Psi$ in the region $\{ (r, \Gt) ~|~ -R(\Gt) < r < R(\Gt), \ -\pi \leq \Gt < \pi \}$ are $(0, \pm \pi/2)$. We further have
$$
H_\Psi (0, \pm \pi/2) =
\begin{bmatrix}
0 & \pm1\\
\pm1 & 0
\end{bmatrix},
$$
and hence
$$
|\det H_\Psi (0, \pm \pi/2)| = 1, \quad \operatorname{sign}(H_\Psi (0, \pm \pi/2)) = 0.
$$
Also, we have $h(0, \pm \pi/2) = \sqrt{2}$. It then follows from \eqnref{eq:4} that
\beq
I_{k,0}(\Gx) = \frac{2\sqrt{2}\Gp}{k} + o(1/k)
\eeq
as $k \rightarrow \infty$. Thus for each $0 < \Gx < 1$, there exists a positive integer $k_0$ depending on $\Gx$ such that
\beq\label{positive}
I_{k,0}(\Gx)>0
\eeq
for all $k > k_0$.

We next investigate the asymptotic behaviour of $I_{k,l}(\Gx)$ for fixed $k$ and large $l$. Let
$$
h_k(r, \Gt):= h(r,\Gt)e^{-i kr \cos \Gt},
$$
so that
$$
I_{k, l}(\Gx) = \frac{1}{2} \int_{-\pi}^{\pi} \int_{-R(\Gt)}^{R(\Gt)} h_k(r, \Gt) e^{-i lr \sin \Gt} \,dr d\Gt.
$$
We then make a change of variables $\Gt \to \Gt+\pi/2$ so that
$$
I_{k, l}(\Gx) = \frac{1}{2} \int_{-\pi}^{\pi} \int_{-R(\Gt)}^{R(\Gt)} h_k \left( r, \Gt+\frac{\pi}{2} \right) e^{-i lr \cos \Gt} \,dr d\Gt.
$$
Then the phase function is the same as before, namely, $\Psi(r, \Gt) = -r \cos \Gt$, and
$$
h_k \left( 0, \pm \frac{\pi}{2} + \frac{\pi}{2} \right) = \frac{-\sqrt{2} \Gx (1 - \sqrt{1 - \Gx^2})}{1 - \Gx^2},
$$
and hence
$$
I_{k,l}(\Gx) = -\frac{2\sqrt{2}\Gp\Gx(1 - \sqrt{1-\Gx^2})}{(1-\Gx^2)l} + o(1/l)
$$
as $l \rightarrow \infty$. Since $0 < \Gx < 1$, we have
$$
\frac{\Gx(1 - \sqrt{1-\Gx^2})}{(1-\Gx^2)} >0.
$$
Thus for each $0 < \Gx < 1$ and $k \in \Zbb$, there exists a positive integer $l_k$ such that
\beq\label{negative}
I_{k,l}(\Gx) < 0
\eeq
for all $l > l_k$.

The statement of the theorem follows from \eqnref{positive} and \eqnref{negative}.
\end{proof}


\end{document}